                    	\def\version{1 August, 2012}	                          %
\def\@rmrk#1#2{\refstepcounter
    {#1}\@ifnextchar[{\@yrmrk{#1}{#2}}{\@xrmrk{#1}{#2}}}
\makeatletter\@addtoreset{equation}{section}\makeatother
\newfont{\bfit}{cmbxti10 scaled 2000}
\newfont{\biggi}{cmr12 scaled 2000}
\newtheorem{step}{STEP}
\newcommand{\bes}{\begin{step}}
\newcommand{\es}{\end{step}}
 \newcommand{\eps}{\varepsilon}
 \newcommand{\e}{{\rm e}}
 \newcommand{\esssup}{{\rm esssup}\,}
 \newcommand{\essinf}{{\rm essinf}\,}
 \newcommand{\R}{\mathbb{R}}
 \newcommand{\Z}{\mathbb{Z}}
 \newcommand{\N}{\mathbb{N}}
 \renewcommand{\d}{\,\mathrm{d}}
 \newcommand{\prob}{\mathbb{P}}
 \newcommand{\Prob}{{\rm Prob }}
 \renewcommand{\P}{\mathbb{P}}
 \newcommand{\E}{\mathbb{E}}
 \newcommand{\Ncal}{{\mathcal N}}
 \newcommand{\Ical}{{\mathcal I}}
 \newcommand{\Pcal}{{\mathcal P}}
 \newcommand{\Tcal}{{\mathcal T}}
 \newcommand{\Xcal}{{\mathcal X}}
 \newcommand{\Scal}{{\mathcal S}}
 \newcommand{\ssup}[1] {{\scriptscriptstyle{({#1}})}}
\def\1{{\mathchoice {1\mskip-4mu\mathrm l}      
{1\mskip-4mu\mathrm l}
{1\mskip-4.5mu\mathrm l} {1\mskip-5mu\mathrm l}}}
 \newcommand{\kommentar}[1]{}
\renewcommand{\subsection}{\secdef \subsct\sbsect}
\newcommand{\subsct}[2][default]{\refstepcounter{subsection}
\vspace{0.15cm}
{\flushleft\bf \arabic{section}.\arabic{subsection}~\bf #1  }
\nopagebreak\nopagebreak}
\newcommand{\sbsect}[1]{\vspace{0.1cm}\noindent
{\bf #1}\vspace{0.1cm}}
\newtheorem{theorem}{Theorem}[section]
\newtheorem{lemma}[theorem]{Lemma}
\newtheorem{Assumption}[theorem]{Assumption}
\theoremstyle{definition}
\newtheorem{remark}[theorem]{Remark}
\def\thebibliography#1{\section*{Bibliography}
  \list%
  {\arabic{enumi}.}
    {\settowidth\labelwidth{[#1]}\leftmargin\labelwidth
    \advance\leftmargin\labelsep
    \parsep0pt\itemsep0pt
    \usecounter{enumi}}
    \def\newblock{\hskip .11em plus .33em minus .07em}
    \sloppy                   
    \sfcode`\.=1000\relax}
\def\P{\prob}
\begin{document}
\title[Moment asymptotics for branching random walks in random environment]{\Large Moment asymptotics\\\medskip for branching random walks\\\medskip in random environment}
\author[Onur G\"un, Wolfgang K\"onig and Ozren Sekulovi\'c]{}
\maketitle
\thispagestyle{empty}
\vspace{-0.5cm}

\centerline{\sc By Onur G\"un\footnote{Weierstrass Institute Berlin, Mohrenstr.~39, 10117 Berlin, {\tt guen@wias-berlin.de} and {\tt koenig@wias-berlin.de}}, Wolfgang K\"onig$ $\footnotemark[1]$^,$\footnote{Institute for Mathematics, TU Berlin, Str.~des 17.~Juni 136, 10623 Berlin, Germany, {\tt koenig@math.tu-berlin.de}}, and Ozren Sekulovi\'c\footnote{Freie Universit\"at Berlin, Habelschwerdter Allee 45, 14195 Berlin, {\tt ozrens@t-com.me}}}
\renewcommand{\thefootnote}{}

\footnote{\textit{AMS 2010 Subject Classification:} 60J80, 60J55, 60F10, 60K37.}
\footnote{\textit{Keywords:} branching random walk, random potential, parabolic Anderson model, Feynman-Kac-type formula, annealed moments, large deviations.}

\setcounter{footnote}{3}
\renewcommand{\thefootnote}{\arabic {footnote}}
\vspace{-0.5cm}
\centerline{\textit{Weierstrass Institute Berlin, TU Berlin and FU Berlin}}
\vspace{0.2cm}

\begin{center}
\version
\end{center}

\begin{quote}{\small }{\bf Abstract.} We consider the long-time behaviour of a branching random walk in random environment on the lattice  $\Z^d$. The migration of particles proceeds according to simple random walk in continuous time, while the medium is given as a random potential of spatially dependent killing/branching rates. The main objects of our interest are the annealed moments $\langle m_n^p \rangle $, i.e., the  $p$-th moments over the medium of the $n$-th moment over the migration and killing/branching, of the local and global population sizes. For $n=1$, this is well-understood \cite{GM98}, as $m_1$ is closely connected with the parabolic Anderson model. For some special distributions, \cite{A00} extended this to $n\geq2$, but only as to the first term of the asymptotics, using (a recursive version of) a Feynman-Kac formula for $m_n$.

In this work we derive also the second term of the asymptotics, for a much larger class of distributions. In particular, we show that $\langle m_n^p \rangle$ and $\langle m_1^{np} \rangle$ are asymptotically equal, up to an error $\e^{o(t)}$. The cornerstone of our method is a direct Feynman-Kac-type formula for $m_n$, which we establish using the spine techniques developed in \cite{HR11}.
\end{quote}

\setcounter{section}{0} 

\setcounter{tocdepth}{2}


\section{Introduction}

\noindent Random processes in random surroundings are under investigation for decades. Examples of such processes include (1) trajectories of random walks and Brownian motion in random environment with the focus on laws of large numbers and central limit theorems or even invariance principles, (2) heat equation and other partial differential equation systems in random potential with the focus on intermittent behaviour, (3) polymer measures and directed percolation in random  medium with the focus on free energies. In all these models, a rich phenomenology of asymptotic behaviours arises, which is not shared by the original model in non-random (homogeneous) surrounding, and in most of these models the research goes on, as many of the main features have not yet been properly understood. 

Another large class of random processes that develop striking properties in random surroundings is the class of  branching processes. Let us briefly describe some of the work that was carried out about these models. Branching discrete random walks on $\Z^d$ with time-space i.i.d.~offspring distributions were studied in the context of survival properties, global/local growth rates and diffusivity; and their connections to the directed polymers in random environment, see e.g.~\cite{BGK05,Y08,CY11}. Detailed analyses of recurrence/transience properties of discrete-time branching Markov chains with only space-dependent environment, which does not exhibit in general the the usual dichotomy valid for irreducible Markov chains, were carried out in \cite{CMP98, MP00, MP03, CP07, M08, BGK09, GMPV10}, to mention some. The main techniques in these studies relate these models to the better-known random walk in random environments, using the spectral properties of underlying Markov process and studying the embedded Galton-Watson processes in random environment.

In this paper, we study a {\it branching random walk in random environment (BRWRE)}, where the particles move around in space like independent random walks in continuous time, and the killing/branching takes place in sites with a random site-dependent rate. We are interested in the long-time asymptotics of the annealed moments of any order of the local and global population sizes. As was explained in \cite {GM90} for the case of first moments, this question stands in a close connection with the description of the intermittent behaviour of the main particle flow, i.e., its concentration behaviour in small islands. According to the best of our knowledge, this question for the higher moments has hardly been investigated for this model yet, the only example being \cite{A00}. In that paper, a deep relation between the moments of the BRWRE and the parabolic Anderson model is revealed and employed in order to analyse the annealed moments of the BRWRE, i.e., the  $p$-th moments over the medium of the $n$-th moment over the killing/branching and migration of the total and local population size.  It is the aim of the present paper to significantly increase the validity and the deepness of the results of \cite{A00} and to reveal the general mechanism that leads to the moment asymptotics. In contrast with \cite{A00}, we will be using probabilistic methods rather than PDE methods.

\subsection{Branching random walk in random environment}\label{sec-BRWRE}

\noindent Let us describe the model in more detail. The branching random environment on the lattice $\Z^d$ is a pair $\Xi =(\xi_0,\xi_2)$ of two independent i.i.d.~fields $\xi_0=(\xi_0(y))_{y\in\Z^d}$ and $\xi_2=(\xi_2(y))_{y\in\Z^d}$ of positive numbers. Indeed, $\xi_0(y)$ and $\xi_2(y)$ play the r\^{o}le of the rate of the replacement of a particle at $y \in \Z^d$  with 0 or 2 particles, respectively. For $n=0$, this is a killing, for $n=2$, this is a binary splitting. (See Section~\ref{sec-moregeneral} for more general branching mechanisms.)

The probability measure corresponding to $\Xi$ is denoted $\Prob$; expectation with respect to $\Prob$ will be written with angular brackets $\langle\cdot\rangle$. For a given realization of $\Xi$, the branching process with rate field $\Xi$  is now defined by determining that any particle located at a lattice site $y \in \Z^d$, is subject to the killing/branching defined by the rates $\xi_0(y)$ and $\xi_2(y)$, and additionally each particle performs a continuous-time random walk on $\Z^d$ with generator $\kappa\Delta$, where $\kappa>0$ is a parameter, and 
$$
    \Delta f(x)= \sum_{y \sim x} \bigl[f(y)-f(x)\bigr], \qquad
    \mbox{ for } x\in \Z^d,\, f\in\ell^2(\Z^d),
$$
is the standard lattice Laplacian. We write expectation with respect to a random walk with generator $\kappa\Delta$ starting from $x$ as $\P_x$ with corresponding expectation $\E_x$. We consider a localised initial condition, i.e., at time $t=0$, there is a single particle at some site $x\in\Z^d$. Probability and expectation w.r.t.~the migration, branching and killing of the BRWRE are denoted by $P_x$ and $ E_x$, respectively, for fixed medium $\Xi$. 

The description of the dynamics of the population is as follows. If a particle is at some time at some site $y$, then during a small time interval of length $h$, with probability $\kappa h + o(h)$ it moves to a neighbouring site chosen uniformly at random, with probability $\xi_2(y) h + o(h)$ it dies and is replaced by two descendant particles, and with probability $\xi_0(y)h + o(h)$ it is killed without producing any offspring. Finally, with probability $1-(\kappa+\xi_2(y)+\xi_0(y))h + o(h)$ the particle experiences no changes during the whole time interval of length $h$.

Let $\eta(t,y)$ be the number of particles at time $t\in[0,\infty)$ at $y\in\Z^d$, and let $\eta(t)=\sum_{y\in{\Z^d}} \eta(t,y)$ be the total population size at time $t$. The main objects of interest in this paper are the quenched moments 
\begin{equation}
m_n(t,x,y)={ E}_x[\eta(t,y)^n]\qquad\mbox{and}\qquad m_n(t,x)={ E}_x[\eta(t)^n],\qquad n\in\N,
\end{equation}
i.e., the expected $n$-th powers of the local and global particle numbers, where the expectation is taken only over the migration and the killing/branching, for frozen killing/branching rates $\Xi$. Note that, for $n=1$, $m_1(t,x)$ is equal to the sum of $m_1(t,x,y)$ over $y\in\Z^d$, but such a relation is not valid for $n\geq2$. 

It will be the main purpose of the present paper to analyse the large-$t$ asymptotics of the $p$-th moments of  $m_n(t,x)$ and of $m_n(t,x,y)$, taken over the medium $\Xi$.

\subsection{Connection with the parabolic Anderson model}\label{sec-PAMconn}

\noindent It is a fundamental knowledge in the theory of branching processes that the expected particle number satisfies certain partial differential equation systems. In our case, the characteristic system reads as follows. Put 
\begin{equation*}
\xi=\xi_2-\xi_0,
\end{equation*}
and fix $y\in\Z^d$, then, (under certain integrability conditions, see \cite{GM90}) for fixed localised initial condition $m_1(0,\cdot,y)=\delta_y(\cdot)$, the map $(t,x)\mapsto m_1(t,x,y)$ is the unique positive solution to the Cauchy problem for the heat equation with potential $\xi$, i.e.,
    \begin{eqnarray}
    \frac{\partial}{\partial t} m_1(t,x,y) & = & \kappa\Delta m_1(t,x,y) + \xi(x) m_1(t,x,y),\label{PAM1}
    \qquad \mbox{ for } (t,x)\in(0,\infty)\times \Z^d.
    \end{eqnarray}
Similarly, the map $(t,x)\mapsto m_1(t,x)$ is the unique positive solution of (\ref{PAM1}) with delocalized initial condition $m_1(t,\cdot)\equiv 1$.

The interesting feature in our case is that the potential $\xi$ is random, and here \eqref{PAM1} is often called the {\it parabolic Anderson model}. In fact, the operator $\kappa\Delta+\xi$ appearing on the right-hand side is called the {\it Anderson operator}; its spectral properties are well-studied in mathematical physics. Equation~\eqref{PAM1} describes a random mass transport through a random field of
sinks and sources, corresponding to lattice points $z$ with $\xi(z)<0$ and $\xi(z)>0$, respectively. We refer the reader to \cite{GM90}, \cite{M94} and \cite{CM94} for more background and to \cite{GK04} for a survey on mathematical results. We see two competing effects: the diffusion mechanism (Laplacian) tends to make the field $m_1$ flat, and the local growth (potential) tries to make it irregular. 

Furthermore, it is also widely known since long \cite{GM90} that $m_1$ admits a representation in terms of the {\it Feynman-Kac formula}:
    \begin{equation}
    m_1(t,x,y)=\E_x \Bigl[\exp\Bigl\{\int_0^t\xi(X_s)\, \d s\Bigr\}\delta_y(X_t)\Bigr],\qquad (t,y)\in[0,\infty)\times\mathbb Z^d,\label{PAM1FKlocal}
    \end{equation}
and the same formula without the last indicator for $m_1(t,x)$, where $(X_s)_{s\in[0,\infty)}$ denotes a simple random walk with generator $\kappa\Delta$. Note that $m_1$ depends only on the difference $\xi$ of $\xi_2$ and $\xi_0$. 

The asymptotics of the moments of $m_1$ were analysed in \cite{GM98} for the interesting special case that the distribution of $\xi$ lies in the vicinity of the so-called {\it double-exponential distribution} with parameter $\rho\in(0,\infty)$,
    \begin{equation}
    \Prob(\xi(x)>r)=\exp\{-\e^{r/\rho}\}, \qquad r\in(0,\infty). \label{DExpTail}
    \end{equation}
The precise assumption on $\xi$ can be written down in terms of the logarithmic moment generating function 
\begin{equation}
H(t)=\log\langle \e^{t\xi(0)}\rangle,
\end{equation}
which is assumed to be finite for any $t>0$.

\begin{Assumption}\label{DoubleExp} There exists $\rho\in[0,\infty]$ such that
    \begin{equation}\label{AssumptionHl}
    \lim_{t\rightarrow\infty}\frac{H(ct)-cH(t)}{t}=\rho c\log c,\qquad c\in(0,1). 
    \end{equation}
\end{Assumption}
Under this assumption, it is proven in \cite{GM98} that, for any $x\in \Z^d$, as $t  \rightarrow \infty$,
    \begin{equation}\label{MomentGM98}
    \langle m_1^p(t,x) \rangle = \e^{H(pt)}\,\e^{ - 2d\kappa \chi (\rho/\kappa) pt + o(t)},\qquad p\in\N, 
    \end{equation}
where $\chi$ is defined as
    \begin{equation}\label{chi}
    \chi (\rho) = \frac 12\inf_{\mu \in \mathcal{P}(\Z)}{[\Scal(\mu)+\rho \Ical(\mu)]}.  
    \end{equation}
Here $\mathcal{P}(\Z)$ denotes the space of probability measures on $\Z$, and the  functionals $ \Scal,\Ical\colon \mathcal{P}(\Z) \rightarrow \R_{+}$ are given by
    \begin{equation}
    \Scal(\mu) = \sum_{x \in \Z} \big(\sqrt{\mu(x+1)}-\sqrt{\mu(x)}\big)^2   \qquad  \mbox{and}\qquad \Ical(\mu) = -\sum_{x \in \Z} \mu(x)\log \mu(x).
    \end{equation}
We have $0 < \chi (\rho)< 1$ for $\rho\in(0,\infty)$ and $\chi(0)=0$ and $\lim_{\rho \rightarrow \infty}\chi(\rho)=\chi(\infty)=1$. The right-hand side of \eqref{MomentGM98} is also equal to the moments of $m_1(t,x,y)$ for any fixed $x, y\in\Z^d$, as is seen from an inspection of the proof (see Remark 1.3 in \cite{GM98}). Also note that $H(t)\gg 2d\kappa \chi (\rho/\kappa) pt$ for large $t$, that is, asymptotically the first term on the right-hand side of \eqref{MomentGM98} is much larger than the second term.

Observe that the $p$-th moments of $m_1$ at time $t$ behave  like the first moment at time $tp$, up to the precision of \eqref{MomentGM98}. This can be easily guessed from a standard eigenvalue expansion for $m_1(t,x)$ in terms of the eigenvalues and eigenfunctions of $\kappa\Delta$ in large $t$-dependent boxes with zero or periodic boundary condition; in fact, $m_1(t,x)$ is roughly equal to $\e^{t\lambda_1(t)}$, where $\lambda_1(t)$ is the principal one. Then, obviously, $m_1^p(t,x)$ is roughly equal to $\e^{tp\lambda_1(t)}$.

\subsection{Moments of the BRWRE}\label{sec-momentsBRWRE}

\noindent Let us now turn to the main object of the present paper, the moments of $m_n$ for $n\geq 2$. We can formulate our main result. Recall our assumptions from the beginning of Section~\ref{sec-BRWRE}. We will also suppose that the branching rate $\xi_2(0)$ satisfies Assumption~\ref{DoubleExp}. In the case $\rho=\infty$, we will need an extra assumption to avoid too large a growth of $H_2(t)$:
\begin{Assumption}\label{momentassump} For any $k\in\N$,
\begin{equation}
\big\langle \xi_2(0)^k \,\e^{\xi_2(0) t}\big\rangle\leq \langle \e^{\xi_2(0) t}\rangle\e^{o(t)}\;\; \text{ as } t\to\infty.
\end{equation}
\end{Assumption}

\begin{theorem}[Moments of the BRWRE]\label{thm-Main} Suppose that the logarithmic moment generating function $H_2$ of $\xi_2(0)$ satisfies Assumption~\ref{DoubleExp} and, in the case $\rho=\infty$, $\xi_2$ also satisfies Assumption~\ref{momentassump}. Fix $x\in\Z^d$, the starting site of the branching process. Then, for any $p, n\in\mathbb{N}$, as $t\to\infty$,
    \begin{equation}\label{Moment_n_p}
    \langle m_n^p(t,x) \rangle = \exp\Big({H(npt)}\,- 2d\kappa \chi (\rho/\kappa) npt + o(t)\Big).
    \end{equation}
The same asymptotics holds true for $\langle m_n^p(t,x,y)\rangle$ for any $y\in\Z^d$.
\end{theorem}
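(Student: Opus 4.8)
The plan is to reduce everything to the known asymptotics \eqref{MomentGM98} for $m_1$ via a direct Feynman–Kac-type representation of $m_n$, and then to carry out the matching upper and lower bounds. First I would establish (this is done in Section~\ref{sec-FKform} of the paper, which we may assume) a spine-based Feynman–Kac formula expressing $m_n(t,x)$ as an expectation over a single random walk path $(X_s)_{s\in[0,t]}$ of a functional of the form $\exp\{\int_0^t \xi(X_s)\,\d s\}$ multiplied by a sum, over all ways of placing up to $n-1$ ``branch points'' along the spine, of products of $\xi_2(X_{s_j})$ at those points together with lower-order moments $m_i$ evaluated along the excursions. Iterating this relation all the way down, $m_n(t,x)$ becomes a finite sum of terms, each of which is an expectation over one spine path of $\e^{\int_0^t \xi(X_s)\,\d s}$ times a product of at most $n-1$ factors $\xi_2(X_{s_j})$ at ordered times $0<s_1<\dots<s_k<t$, with the remaining ``mass'' distributed among at most $n$ independent copies of first-moment Feynman–Kac functionals rooted at those times. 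The leading combinatorial structure is that $n$ units of ``first-moment weight'' are distributed over the tree, so the dominant term should morally look like $\E_x[\e^{n\int_0^t \xi(X_s)\,\d s}\,(\text{polynomial in }\xi_2)]$.

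For the lower bound in \eqref{Moment_n_p}, I would isolate from this expansion the single contribution corresponding to the ``caterpillar'' event in which the spine stays near a favourable region for the potential and $n$ independent walkers all follow essentially the same optimal strategy that is used to prove the lower bound in \eqref{MomentGM98} for $\langle m_1^{np}\rangle$; one branching near time $0$ suffices, so one picks up at least one factor $\xi_2(X_0)=\xi_2(x)$ but this only helps. Concretely, restricting the branching process to ``branch once immediately, then never again, and have all $n$ resulting subtrees survive and move as $n$ parallel copies of the $m_1$-optimiser'' gives $m_n(t,x)\ge c\,\xi_2(x)\,\E_x[\e^{\int_0^t\xi}\,\cdots]^{\,n}$-type bound, and taking $\langle\cdot^p\rangle$ and invoking \eqref{MomentGM98} (and its localised version) at parameter $np$ yields the lower bound $\langle m_n^p\rangle \ge \exp(H(npt) - 2d\kappa\chi(\rho/\kappa)npt + o(t))$. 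Here one uses that $\xi_2$ satisfies Assumption~\ref{DoubleExp} with the \emph{same} $\rho$, so that $H_2$ and hence $H$ have the required self-similar structure; in the case $\rho=\infty$ Assumption~\ref{momentassump} is what guarantees the extra polynomial-in-$\xi_2$ prefactors are negligible, i.e.\ cost only $\e^{o(t)}$.

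For the upper bound, I would bound each term of the (finitely many) terms in the iterated Feynman–Kac expansion separately. A term with $k\le n-1$ branch points and first-moment weights $i_1,\dots,i_r$ (with $\sum i_j$ related to $n$) is, after Hölder's inequality applied both over the migration and over the medium, dominated by a product of $\langle m_1^{q_j}(t,x,\cdot)\rangle$-type quantities with $\sum q_j \le np$, times $\langle \prod \xi_2(\cdot)^{a}\,\e^{(\text{something})\xi_2}\rangle$ correction factors; using \eqref{MomentGM98} for each and the superadditivity $H(q_1 t)+\dots \le H((\sum q_j)t)$ built into Assumption~\ref{DoubleExp} (which forces $H$ to be essentially convex-of-the-right-type so that spreading mass over more walkers is never better than concentrating it), together with $\chi\ge 0$, shows every such term is at most $\exp(H(npt)-2d\kappa\chi(\rho/\kappa)npt+o(t))$, and the dominant one matches. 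The same argument with $\delta_y(X_t)$ inserted on the spine and on the relevant excursion handles $\langle m_n^p(t,x,y)\rangle$.

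\textbf{Main obstacle.} The delicate point is the upper bound: one must control the \emph{non-leading} terms of the branching expansion — those with several branch points and many small independent subtrees — and show none of them beats the leading $H(npt)$ scale. This requires (i) a clean combinatorial bookkeeping of the iterated Feynman–Kac formula so that every term is manifestly a product of $m_1$-functionals at parameters summing to $\le np$ times bounded $\xi_2$-corrections, and (ii) a convexity/superadditivity input on $H$ extracted from Assumption~\ref{DoubleExp} to compare $\sum_j H(q_j t)$ with $H(np\,t)$, plus Assumption~\ref{momentassump} when $\rho=\infty$ to absorb the $\xi_2$-moment prefactors into $\e^{o(t)}$. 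Making the Hölder splitting respect both the time-ordering of branch points and the independence structure of the subtrees, without losing more than $\e^{o(t)}$, is the technical heart of the argument.
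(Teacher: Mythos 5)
The overall architecture you propose (spine/Feynman--Kac expansion of $m_n$ over splitting configurations, reduction to the known $m_1$ asymptotics \eqref{MomentGM98}, Laplace over the splitting times with the dominant contribution coming from immediate splitting) is indeed the paper's, but the step you yourself flag as the technical heart of the upper bound --- that every term of the expansion is ``dominated by a product of $\langle m_1^{q_j}(t,x,\cdot)\rangle$-type quantities with $\sum_j q_j\le np$'', closed by superadditivity of $H$ --- has a genuine gap, and it is exactly here that the paper does something different. First, you cannot pointwise dominate the tree functional by a product of full-length single-walk functionals: the weight is $\exp\{\sum_{(u,v)\in E}\int\xi(X^{\ssup{u,v}}_r)\,\d r\}$ with each edge counted once, whereas a product over leaves counts every spine edge with multiplicity equal to the number of leaves descending from it; since $\xi=\xi_2-\xi_0$ takes negative values, duplicating those integrals is not an upper bound. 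Second, if you instead apply H\"older edge by edge, the factors are moments of single walks over the edge durations $d_e=t_{I(v)}-t_{I(u)}$, which range over the whole time simplex. With fixed (say equal) H\"older exponents, Assumption~\ref{DoubleExp} shows the resulting bound exceeds $\exp\{H(p\,m(\widehat t))\}$ by a factor of order $\exp\{\rho\,p\,m(\widehat t)\,[\log N+\sum_e\frac{d_e}{m(\widehat t)}\log\frac{d_e}{m(\widehat t)}]\}$ ($N$ the number of edges), which is exponentially large in $t$ unless all durations are equal; so the bookkeeping overshoots \eqref{Moment_n_p} already at the level of the leading term. To kill this loss one must take duration-dependent exponents $r_e\propto m(\widehat t)/d_e$, but then the factors are no longer integer moments of $m_1$ at time $t$, and \eqref{MomentGM98} cannot simply be ``invoked'': one needs asymptotics of $\langle\E_x[\exp(r\int_0^s\xi(X_u)\,\d u)]^p\rangle$ uniformly in continuously varying $(r,s)$. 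The paper avoids this entirely: it keeps the whole tree functional intact, rewrites it through the occupation times $\ell_{\widehat t}$ of the branching walk (total mass $m(\widehat t)\le nt$), proves a dedicated LDP for these occupation measures, locally uniformly in the rescaled time vector (Lemma~\ref{thm-LDPBRW}), and reruns the G\"artner--Molchanov variational/Laplace analysis directly; the $\xi_2$-prefactors are decoupled by H\"older with $q_t\downarrow 1$ when $\rho<\infty$, and via monotonicity and log-subadditivity of $l\mapsto\langle\e^{l\xi_2(0)}\xi_2(0)^k\rangle/\langle\e^{l\xi_2(0)}\rangle$ together with Assumption~\ref{momentassump} when $\rho=\infty$. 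Some substitute for this ingredient (an LDP for the BRW local times, or a uniform version of \eqref{MomentGM98}) is indispensable; also note that discarding configurations of total mass $<np$ uses $H(t)\gg t$ for $\rho>0$ and $\chi(0)=0$ for $\rho=0$, not merely $\chi\ge 0$.

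A smaller point: your lower bound is both heavier and leakier than necessary. Forcing splittings near time $0$ (with binary branching you need $n-1$ of them, not one) leaves a prefactor $\xi_2(x)^{(n-1)p}$ inside the medium expectation, which is correlated with $m_1^{np}(t,x)$ and can be small, so it cannot simply be pulled out without an extra FKG/Harris or conditioning argument that you do not mention. The paper's lower bound is one line: by Jensen, $m_n(t,x)\ge m_n(t,x,y)=E_x[\eta(t,y)^n]\ge(E_x[\eta(t,y)])^n=m_1(t,x,y)^n$, hence $\langle m_n^p(t,x)\rangle\ge\langle m_1^{np}(t,x,y)\rangle$, and \eqref{MomentGM98} with $p$ replaced by $np$ gives the bound --- no branching construction, no control of $\xi_2$, and no use of Assumption~\ref{momentassump} on this side.
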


Note that the logarithmic moment generating function $H_0$ of $-\xi_0(0)$ has asymptotics $\frac 1t H_0(t)\to -\essinf(\xi_0(0))\in(-\infty,0]$ as $t\to\infty$. Therefore, by independence of $\xi_2$ and $\xi_0$, the logarithmic moment generating function $H$ of $\xi_2(0)-\xi_0(0)$ also satisfies Assumption~\ref{DoubleExp}, and this is crucial for the validity of Theorem~\ref{thm-Main}.

In particular, Theorem~\ref{thm-Main} says that the $p$-th moments of $m_n$ at time $t$ are equal to the first moment of $m_1$ at time $tpn$, up to the precision in \eqref{Moment_n_p}, i.e.,
    \begin{equation}\label{Moment_n_p_heur}
    \langle m_n^p(t,x) \rangle= \langle m_1^{np}(t,x) \rangle \e^{o(t)}=\langle m_1(tnp,x) \rangle \e^{o(t)}, \qquad  t\rightarrow\infty.
    \end{equation}

This fact is not so easy to understand as for the case $n=1$, see above. However, see Section~\ref{sec-Explanation} for some heuristic remarks. The proof of Theorem~\ref{thm-Main} is in Section~\ref{sec-proofThm}. 

The main tool of our proof is a Feynman-Kac-type formula for $m_n$, which we will derive in Section~\ref{sec-FKform}, see Theorem~\ref{thrmmoment}. We are going to use probabilistic tools from the theory of branching processes, the main input coming from the {\it many-to-few lemma} of \cite{HR11}.

In  \cite{A00} there was a weaker version of \eqref{Moment_n_p} derived; actually only the first term $\e^{H(npt)}$, and this only for the rather restricted case of a Weibull distribution, where $H(t)\sim C t^\alpha$ for some $C\in(0, \infty)$ and $\alpha\in(1,\infty)$, a subcase contained in Assumption~\ref{DoubleExp} in $\rho=\infty$. On the other hand, they drop the assumption of independence and only assume spatial homogeneity of $\Xi$. However, this result does not explain the spatial structure of the peaks of the moments of the population size, an information that is contained in the second term, as was discussed at length in \cite{GM98}. The proof in \cite{A00} is based on the fact that $m_n$ is the solution to an inhomogeneous Cauchy problem, where the inhomogeneity is a linear combination of products of $m_1,\dots,m_{n-1}$. Furthermore, they derived from this a Feynman-Kac formula for $m_n$, which depends on that inhomogeneity and is therefore of recursive type. This made it rather difficult to identify the second term of the asymptotics. In contrast, we first derive a direct version of a Feynman-Kac-type formula in Theorem~\ref{thrmmoment} and are then able to find the logarithmic asymptotics of the moments in much higher precision.

\subsection{More general branching}\label{sec-moregeneral}

\noindent Our Theorem~\ref{thm-Main} is formulated only for the special case of binary branching, but it can straightforwardly be extended to more general branching mechanisms, subject to additional conditions. Indeed, assume that the branching random environment is a family $\Xi =(\xi_k)_{k\in\N_0}$ of i.i.d.~fields $\xi_k=(\xi_k(y))_{y\in\Z^d}$ of positive numbers. Then $\xi_k(y)$ is the rate for replacement of a particle at $y$ by precisely $k$ new particles, i.e., a splitting into $k$ particles. To exclude trivialities, we put $\xi_1(y)=0$ for any $y$.  The family $(\xi_k)_{k\in\N_0}$ is not assumed to be i.i.d. Indeed, we at least have to assume that the field
\begin{equation}\label{xidef}
 \xi(y)=\sum_{k=0}^\infty (k-1) \xi_k(y)
\end{equation}
is well-defined (i.e., absolutely convergent) almost surely. One possible choice could be $\xi_k=\xi p_k$ with some probability distribution $(p_k)_{k\in \N_0}$ and some positive i.i.d.~field $\xi$. 

Then, under the assumption that $\sum_{k\in\N}k^n\xi_k(y) < \infty$ almost surely (e.g., if $\xi_k\equiv 0$ for all sufficiently large $k$), our Feynman-Kac-type formula for $m_n$ in Theorem~\ref{thrmmoment} below extends to this more general setting, see Remark~\ref{MomFormGeneral}. Furthermore, under suitable conditions on the moments of $\sum_{k\in\N}k^n\xi_k(y) $, also the proof of Theorem~\ref{thm-Main} in Section~\ref{sec-proofThm} can be easily extended to this situation. In order to avoid cumbersome formulas, we abstained from writing down the details.

\subsection{Discussion}\label{sec-Explanation}

\noindent Let us explain why the moment asymptotics of $m_n^p(t,x)$ are equal to the ones of $m_1^{pn}(t,x)$, see \eqref{Moment_n_p_heur}. We do this for $n=2$ and $p=1$. Note that, according to Theorem~\ref{thrmmoment} below, $m_2=m_1+\widetilde{m}_2$, where
\begin{equation}\label{m2formula}
\widetilde m_2(t,x) =\int_0^t\E_x \Bigl[\exp\Bigl\{\int_0^s\xi(X_r)\, \d r + \int_s^t\xi(X'_r)\, \d r + \int_s^t\xi(X''_r)\, \d r\Bigr\} 2\xi_2(X_s)\Bigr]\, \d s,
\end{equation}
where $(X_r)_{r\in[0,s]}$ and $(X'_r)_{r\in [s,t]}$ and $(X''_r)_{r\in [s,t]}$ are independent simple random walks, given $X_s$, with generator $\kappa\Delta$, starting at $X_0=x$, and $X'_s=X''_s=X_s$. In other words, these three random walks constitute a branching random walk with precisely one splitting at time $s$. The first part in the decomposition $m_2=m_1+\widetilde{m}_2$, corresponds to absence of splitting, and the second one to precisely one splitting.

Let us consider the behaviour of the moments of $\widetilde m_2$ as $t \rightarrow \infty$. The first observation is that the term $2\xi_2(X_t)$ should have hardly any influence. This is due to Assumption~\ref{momentassump}, which rules out cases of extreme growth of $H_2(t)$. One can expect from \eqref{MomentGM98} that the leading term of the expectation on the right-hand side of \eqref{m2formula} should be $\e^{H(2t-s)}$, corresponding to the total time $s+(t-s)+(t-s)$ that the three random walks spend in the random environment. Since $H(t)\to\infty$, this is clearly maximal for $s\approx 0$. Hence, the Laplace method gives that the main contribution comes from $s\approx 0$. Hence, the contribution comes mainly from a product of expectations over two i.i.d.~copies $(X'_r)_{r\in [0,t]}$ and $(X''_r)_{r\in [0,t]}$, i.e., from a term $\approx m_1^2(t,x)$.

In other words, it is favourable for the branching random walk to split as soon as possible into two copies and to travel through the environment with these copies for a long time. The deeper reason for this is that the potential $\xi$ assumes extremely high values in some part of the space, where the two copies collect much of them. This effect seems to be present as soon as $\esssup(\xi(0))$ is positive, and it should be turned into its opposite if $\esssup(\xi(0))$ is negative. More precisely, for such potentials, we expect that $\langle m_n(t,x)\rangle\approx \langle m_1(t,x)\rangle$. We expect that, for all four classes of potentials in the classification made in \cite{HKM06}, a version of Theorem~\ref{thm-Main} can be deduced from Theorem~\ref{thrmmoment}.

\section{Feynman-Kac-type formula for $m_n$ via spine techniques}\label{sec-FKform}

\noindent In this section, we derive a Feynman-Kac-type formula for $m_n$, almost surely with respect to the branching rates $\xi_2$ and killing rates $\xi_0$. Our main result of this section appears in Theorem~\ref{thrmmoment} below. We will use the spine techniques of \cite{HR11}. This requires the introduction of a branching random walk (BRW) in $\Z^d$ with time interval $[0,t]$ with up to $n-1$ splitting events. In order to express this BRW, we will need the following ingredients.
\begin{enumerate}
 \item a tree that expresses the branching structure,
\item an ordering of the splitting sites of the tree to express their order in time,
\item a time duration attached to each bond, 
\item an expectation over a simple random walk bridge attached to each bond.
\end{enumerate}
In order to keep the notation simple, we restrict to binary branching. See Remark~\ref{MomFormGeneral} for more general branching mechanisms. 

We need some notation from the theory of trees. Let $G=(V,E)$ be a finite graph with $V$ the set of vertices and  $E$ the set of edges. $G$ is a {\it tree} if it is simple, connected and has no cycles. Let us assume that $G$ is a {\it rooted tree}, i.e., a tree with a {\it root} $\emptyset\in V$. This induces  a natural ordering of vertices, namely, for $u,v\in V$ we say that $u\preceq v$ if the unique path from the root to $v$ contains $u$. In particular, if $(u,v)\in E$ then either $u\preceq v$ or $v\preceq u$. We hence may assume that $E$ is a directed tree, i.e., $E$ contains only edges $(u,v)$ with $u\preceq v$, in which case we call $u$ the {\it parent} of $v$ and $v$  a child of $u$. Note that, except the root, each vertex has a unique parent. We call a vertex a {\it leaf} if it has no children. We call $G$ a {\it rooted binary tree} if each vertex has at most two children. We distinguish binary trees by labelling the children of each vertex as the left child and the right child.

By $\mathcal{T}_k$ we denote the set of finite rooted binary trees with $k+1$ leaves, such that the root has precisely one child and every other vertex has precisely two children, except for the leaves. Note that $\mathcal T_0$ consists of one tree only, which consists of the root, a leaf and an edge going from the root to the leaf. Furthermore, put $\mathcal{T}=\bigcup_{k\in\N_0}\mathcal T_k$. For a tree in $\mathcal{T}$ we call the vertices other than the root and the leaves {\it splitting vertices}. Note that a tree in $\mathcal T_k$ has precisely $k$ splitting vertices. For $T=(V,E)\in\mathcal T$, we denote by $S$ the set of its splitting vertices and by $L$ the set of its leaves; hence $V=\{\emptyset\}\cup S\cup L$, $\#S=k$ and $\#L=k+1$. We write $T=(\emptyset,S,L,E)$. See Figure \ref{exampletrees} for two representatives from $\mathcal{T}_3$.

\begin{figure} [ht]
\begin{center}
    \centering
    \input{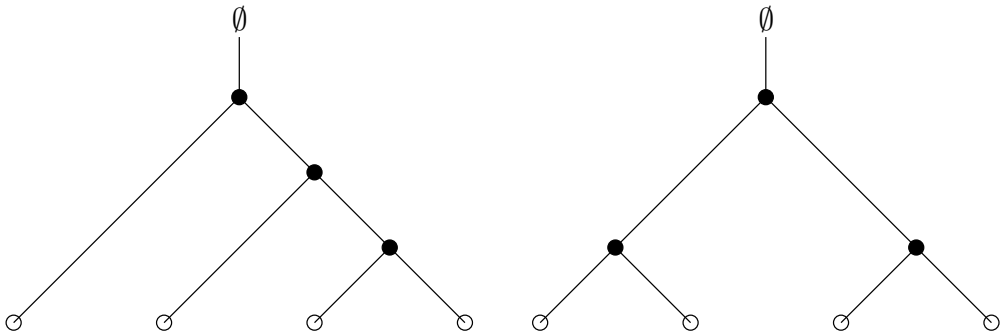}
  \end{center}
\caption{Two trees in $\mathcal{T}_3$. The empty circles represent the leaves $L$ and the full circles represent the branching vertices $S$.}
\label{exampletrees}
    \end{figure}

Now we equip trees with numberings. For  $k\in\mathbb{N}_0$ and $T=(\emptyset,S,L,E)\in\mathcal{T}_k$ let $I\colon \{\emptyset\}\cup S\to \{0,1,2,\dots,k\}$ be a bijection. We call $I$ a {\it monotonous numbering} of $T$ if $I(\emptyset)=0$ and $I(s_1)<I(s_2)$ for any $s_1,s_2 \in S$ with $(s_1,s_2)\in E$. We extend $I$ to $L$ by setting $I(l)=k+1$ for any leaf $l\in L$. See Figure \ref{exampleorderedtrees} for an example. The set of monotonous numberings of $T$ is denoted by $\mathcal{N}(T)$. 

\begin{figure} [ht]
\begin{center}
    \centering
    \input{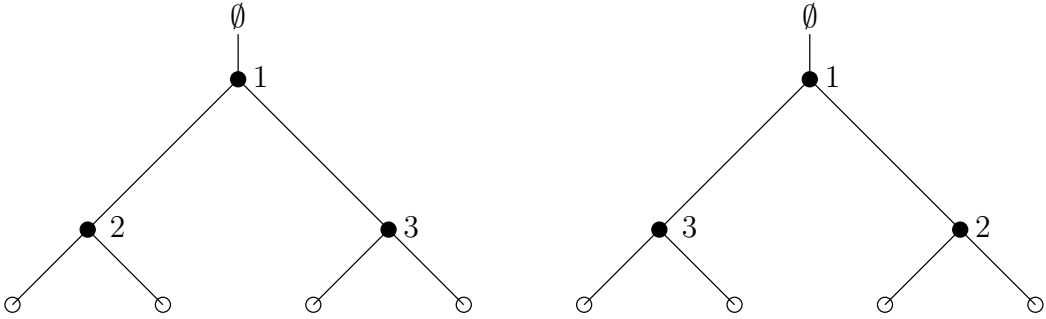}
  \end{center}
\caption{The only two possible monotonous numberings for the tree on the right of Figure \ref{exampletrees}. The left tree there admits only one such numbering.}
\label{exampleorderedtrees}
    \end{figure}

Now we equip numbered trees with times. For $k\in\mathbb{N}_0$ and $t>0$, denote by $Z_k(t)$ the set of time vectors
\begin{equation}
\widehat{t}=(t_0,\dots,t_{k+1}),\qquad\mbox{where}\qquad 0=t_0<t_1<\cdots<t_k<t_{k+1}=t.
\end{equation}
Let us fix a tree $T\in \mathcal{T}_k$, an ordering $I\in\mathcal{N}(T)$ and a time vector $\widehat t\in Z_k(t)$. For $b=(u,v)\in E$, we denote by 
$$
Y^{\ssup{b,\widehat t}}=\big(Y_r^{\ssup{b,\widehat t}}\colon r\in [t_{I(u)},t_{I(v)}]\big)
$$ 
a continuous-time simple random walk on $\Z^d$ with generator $\kappa \Delta$, starting from zero. We assume that the collection $(Y^{\ssup{b,\widehat t}})_{b\in E}$ is independent. We consider $Y^{\ssup{b,\widehat t}}$ as the segment of a branching random walk with parent $u$ and child $v$ that arises from a splitting event at time $t_{I(u)}$, considered until the next splitting event at time $t_{I(v)}$.

Now we compose all these segments of simple random walks according to the tree and define the  BRW on $[0,t]$ with precisely $k$ splits. Fix the starting site $x\in\Z^d$ of the branching process. For a leaf $l\in L$ let $\emptyset=u_0,\dots,u_{j-1},u_j=l$ be the vertices visited by the unique path from $\emptyset$ to the  leaf $l$, and $b_i=(u_{i-1},u_i)$ the corresponding bonds, where $j\in\mathbb{N}$. Then we define the continuous-time random walk $X^{\ssup l}=(X^{\ssup l}_r)_{r\in[0,t]}$ by
\begin{equation}
X_r^{\ssup l}:=x+\sum_{m=1}^{i-1} Y_{t_{I(u_{m})}}^{\ssup{b_m,\widehat t}} + Y_r^{\ssup{b_i,\widehat t}},\qquad r\in [t_{I(u_{i-1})},t_{I(u_i)}], i\in\{1,\dots,j\}.
\end{equation}
Note that the collection of the random walks $(X^{\ssup l})_{l\in L}$ is consistent in the sense that, for any leaves $l$ and $l'$, the paths of $X^{\ssup l}$ and $X^{\ssup {l'}}$ coincide up to the time $t_{I(\widetilde u)}$ of the vertex $\widetilde u$ where the tree path $\emptyset \to l$ splits from the path $\emptyset\to l'$; afterwards they are independent given the site $X^{\ssup l}_{t_{I(\widetilde u)}}=X^{\ssup {l'}}_{t_{I(\widetilde u)}}$. The separate pieces of the BRW between subsequent splits are denoted by 
$$
X^{\ssup {u,v}}=(X^{\ssup {u,v}}_r)_{r\in[t_{I(u)},t_{I(v)}]}=(X^{\ssup l}_r)_{r\in[t_{I(u)},t_{I(v)}]},\qquad (u,v)\in E,
$$
where $l\in L$ is any leaf such that the bond $(u,v)$ lies on the unique path from $\emptyset$ to $l$. Because of the above consistency property of $(X^{\ssup l})_{l\in L}$, the value does not depend on the choice of $l$. The collection of all the path pieces $X^{\ssup {u,v}}$ with $(u,v)\in E$ is consistent in the sense that $X^{\ssup {u,v}}_{t_{I(v)}}=X^{\ssup{v,u'}}_{t_{I(v)}}$ for any edges $(u,v)$ and $(v,u')$. See Figure~\ref{brwontree} for an example.

\begin{figure} [ht]
\begin{center}
    \centering
    \input{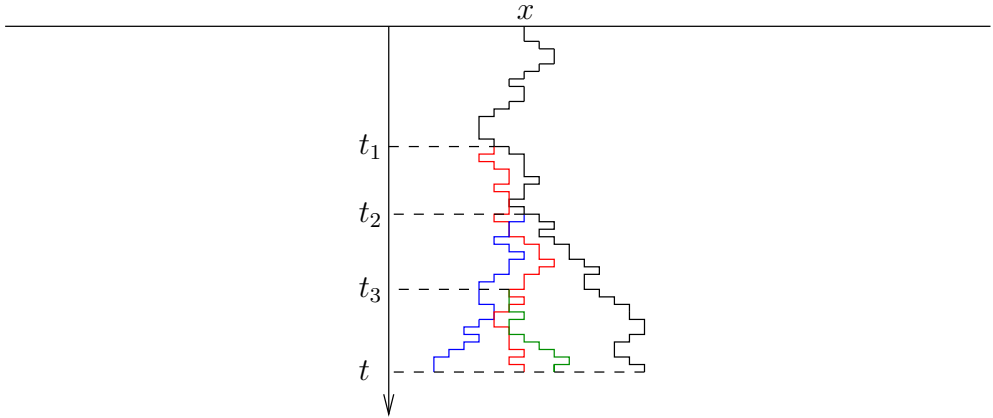}
  \end{center}
\caption{An example of a BRW corresponding to the monotonously numbered tree on the right of Figure \ref{exampleorderedtrees}. }
\label{brwontree}
    \end{figure}

Expectation with respect to the collection $(X^{\ssup l})_{l\in L}$ will be denoted by $\E^{\ssup{T,I,\widehat t}}_{x}$. For $y\in\Z^d$, we abbreviate
\begin{equation}
\Phi_x(T,I,t,y):=\int_{Z_k(t)} \d\widehat t\, \E^{\ssup{T,I,\widehat t}}_{x} \Big[\exp\Big(\sum_{(u,v)\in E}\;\;\int^{t_{I(v)}}_{t_{I(u)}} \xi(X^{\ssup {u,v}}_r)\,\d r \Big) \Big(\prod_{v\in S}\xi_2(X^{\ssup {u,v}}_{t_{I(v)}})\Big)\sum_{l\in L}\1\{X^{\ssup l}_{t}=y\}\Big], 
\end{equation}
where in the product $u$ is the parent of $v$. Furthermore, we define 
\begin{equation}\label{PhixTIdef}
\Phi_x(T,I,t):=\int_{Z_k(t)} \d\widehat t\, \E^{\ssup{T,I,\widehat t}}_{x} \Big[\exp\Big(\sum_{(u,v)\in E}\;\;\int^{t_{I(v)}}_{t_{I(u)}} \xi(X^{\ssup {u,v}}_r)\,\d r \Big) \Big(\prod_{v\in S}\xi_2(X^{\ssup {u,v}}_{t_{I(v)}})\Big)\Big].
\end{equation}

Finally, we define sequence of numbers $c_{k,n}$ for $n\in\mathbb{N}$ and $k=0,\dots,n-1$ by setting $c_{0,n}=1$ for all $n\in\mathbb{N}$ and by the recursive relation
\begin{equation}\label{combid}
c_{k,n}=\sum_{i=1}^{n-k}\binom{n}{i}c_{k-1,n-i},\qquad k=1,\dots,n-1.
\end{equation}

Now we can state the main theorem of this section, which gives us a Feynman-Kac-type formula for the functions $m_n$.

\begin{theorem}\label{thrmmoment} For $n\in\mathbb{N}$ and $x,y\in\mathbb{Z}^d$, we have
    \begin{equation}\label{MomentFormula}
    m_n(t,x)=\sum_{k=0}^{n-1} \sum_{T \in \mathcal{T}_k}\sum_{I \in \mathcal{N}(T)}c_{k,n}\Phi_x(T,I,t),
    \end{equation}
and the same formula for $m_n(t,x,y)$ with $\Phi_x(T,I,t)$ replaced by $\Phi_x(T,I,t,y)$.
\end{theorem}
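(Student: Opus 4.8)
The plan is to expand the $n$-th moment $m_n(t,x)=E_x[\eta(t)^n]$ combinatorially over the genealogy of the population, and to evaluate the resulting sub-sums with the many-to-few lemma of \cite{HR11}. Concretely, I would write $m_n(t,x)=E_x\bigl[\sum_{(u_1,\dots,u_n)}1\bigr]$, the sum running over all ordered $n$-tuples, repetitions allowed, of particles alive at time $t$, and classify each such tuple by the genealogical tree spanned by its distinct entries: collapsing the degree-two vertices of that subtree, keeping a root vertex at the ancestral particle's starting site $x$, and recording the left/right order at each branch point yields a plane binary tree in $\mathcal{T}_k$ for a unique $k\in\{0,\dots,n-1\}$, since the number of distinct entries equals $k+1\in\{1,\dots,n\}$. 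Grouping the $n$-tuples this way gives $m_n(t,x)=\sum_{k=0}^{n-1}\sum_{T\in\mathcal{T}_k} a_{k,n}\,E_x\bigl[N_T(t)\bigr]$, where $N_T(t)$ is the number of ordered $(k+1)$-tuples of distinct particles alive at time $t$ whose collapsed spanned tree is $T$ with the $j$-th tuple entry at the $j$-th leaf of $T$, and $a_{k,n}$ is the number of surjections of $\{1,\dots,n\}$ onto the $k+1$ leaves of $T$. The latter depends only on $n$ and $k$, and conditioning on the set of tuple slots mapped to a prescribed leaf shows at once that $a_{k,n}$ obeys the recursion \eqref{combid}; hence $a_{k,n}=c_{k,n}$.

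It then remains to identify $E_x[N_T(t)]$ with $\sum_{I\in\mathcal{N}(T)}\Phi_x(T,I,t)$, and this is exactly what the many-to-few lemma of \cite{HR11} delivers, applied with $k+1$ spines in our branching mechanism — spatially dependent branching rate $\xi_2(\cdot)$, killing rate $\xi_0(\cdot)$, motion generator $\kappa\Delta$ (the moment condition on the offspring law required by the lemma is void for binary branching and, in the general setting of Section~\ref{sec-moregeneral}, is supplied by the assumption $\sum_j j^n\xi_j(0)<\infty$). Under the lemma, $E_x[N_T(t)]$ becomes an expectation over a $(k+1)$-spine process whose genealogical skeleton has topology $T$: its $k$ branch events occur at ordered times $0<t_1<\dots<t_k<t$, the record of which branch vertex of $T$ occurs at which of these times being a monotonous numbering $I\in\mathcal{N}(T)$ — a linear extension of the partial order carried by $T$ — so that the expectation breaks up as $\sum_{I\in\mathcal{N}(T)}$ of an integral over the simplex $Z_k(t)$; the spine segments strung between consecutive branch points are independent continuous-time $\kappa\Delta$-random walks $X^{\ssup{u,v}}$; and the Radon--Nikodym weight relating the spine-enriched population law to the law of these walks is precisely $\exp\bigl(\sum_{(u,v)\in E}\int_{t_{I(u)}}^{t_{I(v)}}\xi(X^{\ssup{u,v}}_r)\,\d r\bigr)\prod_{v\in S}\xi_2(X^{\ssup{u,v}}_{t_{I(v)}})$. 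The essential point is that the genuine, off-skeleton branching of the population is absorbed into this weight in exactly the way it is absorbed into the classical first-moment formula \eqref{PAM1FKlocal}, which is precisely the $n=1$, $k=0$ case. This gives $E_x[N_T(t)]=\sum_{I\in\mathcal{N}(T)}\Phi_x(T,I,t)$, and summing over $k$ and $T$ yields \eqref{MomentFormula}. The statement for $m_n(t,x,y)$ follows by the same argument, demanding in addition that the sampled particles be located at $y$, which inserts the appropriate leaf indicator(s) and replaces $\Phi_x(T,I,t)$ by $\Phi_x(T,I,t,y)$.

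The main obstacle is the second step: setting up the many-to-few change of measure cleanly in this continuous-time, random-medium setting and verifying that all off-skeleton branching is absorbed so neatly that, in the end, only the $k$ edges of the skeleton carry the Feynman--Kac exponential and only the $k$ branch vertices of the skeleton carry a single factor $\xi_2$ each. (An alternative would be to unfold the classical recursive Cauchy problem for $m_n$ used in \cite{A00}, but the genealogical route keeps the probabilistic picture visible and produces the direct, non-recursive formula \eqref{MomentFormula} in one stroke.) The rest is bookkeeping of a somewhat intricate kind — matching the surjection count to \eqref{combid}, and converting the branching partial order on $T$ into the sum over monotonous numberings weighted by integrals over $Z_k(t)$.
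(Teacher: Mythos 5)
Your combinatorial front end is sound and genuinely different from the paper's organization: expanding $\eta(t)^n$ over ordered $n$-tuples, grouping by the plane binary tree spanned by the distinct entries (with an arbitrary but fixed left/right convention at splits, and using that a.s.\ no two splits are simultaneous), and identifying the multiplicity as the number of surjections of $\{1,\dots,n\}$ onto the $k+1$ leaves is correct, and your one-line verification that this count satisfies \eqref{combid}, hence equals $c_{k,n}$, is right. The paper instead applies the many-to-few lemma once with $n$ marks and runs a strong induction on $n$, conditioning on the first effective splitting of the marks; there $c_{k,n}$ emerges from the allocation probabilities $\binom{n}{l}/(2^n-2)$ threaded through the induction, so your closed combinatorial identification is a nice simplification of that part.

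The gap is in your second step. The identity $E_x[N_T(t)]=\sum_{I\in\mathcal{N}(T)}\Phi_x(T,I,t)$ is not something the lemma of \cite{HR11} "delivers" off the shelf: what it gives is the representation \eqref{manytofew}, a spine expectation of the weight $A(t)$ (which carries $(2^{D(v)}-1)\xi_2-\xi_0$ along the entire skeleton) under a measure in which a particle with $j$ marks branches at rate $2^j\xi_2$, resp.\ $(2^j-2)\xi_2$ after discarding ineffective splits. To reach $\Phi_x(T,I,t)$ you must insert the indicator that the $k+1$ marks sit on distinct particles with skeleton topology $T$, decompose over the time order $I$ and the split times, and then check the cancellations: each effective split contributes the rate $(2^j-2)\xi_2$ times the allocation probability $(2^j-2)^{-1}$ per specific nonempty allocation, i.e.\ one factor $\xi_2$; and along an edge whose skeleton particle carries $j$ marks the no-split factor $\exp(-\int(2^j-2)\xi_2)$ combines with the weight $\exp(\int((2^j-1)\xi_2-\xi_0))$ to give exactly $\exp(\int\xi)$. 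These cancellations do hold, so your route is viable, but as written you assert the conclusion and explicitly defer the verification as "the main obstacle"---and that verification is the analytic heart of Theorem~\ref{thrmmoment}. The paper's induction, i.e.\ \eqref{inter1}--\eqref{inter2} and \eqref{proofeq1}--\eqref{proofeq6}, is precisely the device that carries out this bookkeeping rigorously one split at a time; you would need either to reproduce an equivalent computation of the skeleton's law and weight under the reduced spine measure, or to adopt the paper's inductive conditioning, before your step two stands.
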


\begin{proof} We denote by $N(t)$ the set of particles alive at time $t$. For a particle $u\in N(t)$ let $\sigma_u$ and $\tau_u$ denote the birth and death time of $u$, respectively. We put $\sigma_u(t)=\sigma_u\wedge t$ and $\tau_u(t)=\tau_u\wedge t$. If $u\in N(t)$ we write $Z^{\ssup u}_s$ for the position of the unique ancestor of $u$ alive at time $s\in[0,t]$. If $u$ has no children we say that $Z_s^{\ssup u}$ is at the graveyard state, $\partial$, for any $s\geq \tau_u$.

Specialising  \cite[Section 2]{HR11} to our situation, we define a new branching process by imposing the following rules:
\begin{enumerate}
\item We start with one particle at $x$, which carries $n$ marks (and their positions) $1,2,\dots,n$.

\item We think of each of the marks $1,2,\dots,n$ as a spine and denote by $\zeta_t^{\ssup i}$ the position of the whichever particle that carries the mark $i$ at time $t$.

\item Particles diffuse as under $P_x$, i.e., as independent continuous-time random walks with generator $\kappa\Delta$.

\item A particle at position $y$ carrying $j$ marks branches at rate $2^j{\xi_2(y)}$ and is replaced with two new particles.

\item At such a branching event of a particle carrying $j$ marks, each mark chooses independently and uniformly at random one of the two particles to follow.

\item Particles not carrying any marks behave as under $P_x$.
\end{enumerate}

We write $\mathbb{Q}_x^{\ssup n}(\cdot)$ for the corresponding probability measure and $\mathbb{Q}_x^{\ssup n}[\cdot]$ for the corresponding expectation. We call the collection of particles that have carried at least one mark up to time $t$ the {\it skeleton} at time $t$ and write ${\rm skel}(t)$. We define $D(v)$ as the number of marks carried by a particle $v$. Let us abbreviate
\begin{equation}
A(t)=\exp\Big(\sum_{v\in{\rm skel}(t)}\int_{\sigma_v(t)}^{\tau_v(t)}\Big((2^{D(v)}-1)\xi_2(Z^{\ssup v}_r)-\xi_0(Z_r^{\ssup v})\Big)\,\d r\Big).
\end{equation}
We now apply the many-to-few lemma \cite[Lemma 3]{HR11} for $Y=1$ and $\zeta\equiv 1$ and obtain
\begin{equation}\label{manytofew}
m_n(t,x)=\mathbb{Q}_x^{\ssup n}[A(t)]\qquad\mbox{and}\qquad m_n(t,x,y)=\mathbb{Q}_x^{\ssup n}\Big[A(t)\sum_{v\in {\rm skel}(t)}\mathds{1}\{Z_t^{\ssup v}=y\}\Big].
\end{equation}

\newcommand{\bq}{\overline{\mathbb{Q}}_x^{\ssup n}}

Note that the spine trajectory does not undergo a splitting at a branching event, if all the marks choose the same child to follow. Hence, we only want to consider splitting events that not all the marks choose the same particle to follow. Note that the probability of such event when a particle carrying $j$ marks branches is $1-2^{-j+1}$. Then the rate of such branching events for particles carrying $j$ marks at position $y$ is $(2^j-2)\xi_2(y)$. Accordingly, we define a measure $\bq$ by changing the items (iv) and (v) in the above description of $Q_x^{\ssup n}$ by
\begin{enumerate}
\item[($\overline{iv}$)]  A particle at position $y$ carrying $j$ marks branches at rate $(2^j-2){\xi_2(y)}$ and is replaced with two new particles.

\item[($\overline{v}$)]  At a branching event of a particle carrying $j$ marks choose uniformly at random one of the two particles to follow  conditioned on for each new particle there is at least one mark following it.
\end{enumerate}

Note that \eqref{manytofew} is still valid when $\mathbb{Q}_x^{\ssup n}$ is replaced by $\bq$. 

We only prove \eqref{MomentFormula} since the proof of the formula for the moments of $m_n(t,x,y)$ is done exactly in the same way. We proceed the proof by the method of strong induction. For $n=1$, (\ref{MomentFormula}) is immediate. Now assume that (\ref{MomentFormula}) holds for $n$ replaced by any $i\in\{1,\dots,n-1\}$, and we prove that it is also true for $n$. 

We start from the first formula in \eqref{manytofew} and integrate over all values of the time, $T$, of the first branching event under $\overline{Q}_x^{\ssup n}$ and over all possible branchings. The conditional distribution of $T$ given $\zeta^{\ssup 1}$ is given by
\begin{equation}
\bq\big(T>t\,\big|\,\zeta^{\ssup 1 }\big)
=\exp\Big(-\int_0^t (2^n-2)\xi_2(\zeta^{\ssup 1}_r)\,\d r\Big).
\end{equation}
On the event $\{T>t\}$, we have ${\rm skel}(t)=\{\emptyset\}$, $\sigma_\emptyset(t)=0$, $\tau_\emptyset(t)=t$, $D(\emptyset)=n$ and $Z^{\ssup{\emptyset}}=\zeta^{\ssup 1}$. Hence, we have
\begin{equation}\label{inter1}
\overline{\mathbb{Q}}_x^{\ssup n}\Big[A(t)\1_{\{T>t\}}\,\Big|\,\zeta^{\ssup 1 }\Big]
=\exp\Big(\int_0^t \xi(\zeta^{\ssup 1}_r)\,\d r\Big).
\end{equation}
Integrating this with respect to $\bq$, we get
\begin{equation}\label{inter2}
\begin{aligned}
\bq\big[A(t)\1_{\{T>t\}}\big]&=\bq\Big[\text{l.h.s.~of }(\ref{inter1})\Big]=\bq\left[\exp\left(\int_0^t \xi(\zeta^{\ssup 1}_r)\,\d r\right)\right]\\
&=\E_x\left[\exp\left(\int_0^t \xi(X_r)\,\d r\right)\right],
\end{aligned}
\end{equation}
since any spine follows a simple random walk with generator $\kappa\Delta$. This is the term that corresponds to $k=0$ in the sum in (\ref{MomentFormula}). Similarly we can calculate the conditional density of $T$ as
\begin{equation}\label{proofeq1}
\overline{\mathbb{Q}}_x^{\ssup n}\big(T\in \d t_1\,\big|\,\zeta^{\ssup 1}\big)=\exp\left(-\int_0^{t_1} (2^n-2)\xi_2(\zeta^{\ssup 1}_r)\,\d r\right) (2^n-2)\xi_2(\zeta^{\ssup 1}_{t_1})\,\d t_1,\qquad t_1>0.
\end{equation}

Let $B_{l,n-l}$ be the event that at the branching time $T$, $l$ marks follow the first child of $\emptyset$ and $n-l$ marks follow the second child. Then it is clear that
\begin{equation}\label{proofeq2}
\bq\big(B_{l,n-l}\big)=\binom{n}{l}\frac{ 1}{2^n-2},\qquad l=1,\dots,n-1. 
\end{equation}
So for $l=1,\dots,n-1$, by \eqref{manytofew}, we have, using the Markov property at time $t_1 \in[0,t]$,
\begin{equation}\label{proofeq3}
\begin{aligned}
\bq&\big[A(t)\,\big|\, B_{l,n-l},T=t_1,\zeta^{\ssup 1} \big]\\
&=\exp\Big(\int_0^{t_1}\Big\{(2^n-1)\xi_2(\zeta^{\ssup 1}_r)-\xi_0(\zeta_r^{\ssup 1})\Big\}\,\d r\Big)\,m_l(t-t_1,\zeta^{\ssup 1}_{t_1})\,m_{n-l}(t-t_1,\zeta^{\ssup 1}_{t_1}).
\end{aligned}
\end{equation}
Hence, using (\ref{proofeq3}), (\ref{proofeq1}), (\ref{proofeq2}) and the independence of the splitting time and the number of offsprings, we get
\begin{equation}\label{proofeq4}
\begin{aligned}
\int_0^t&\sum_{l=1}^{n-1}\bq\Big[A(t)\1_{B_{l,n-l}},T\in \d t_1\,\Big|\,\zeta^{\ssup 1}\Big]\\
& =\int_0^t\sum_{l=1}^{n-1}\binom{n}{l}\exp\Big(\int_0^{t_1}\xi(\zeta^{\ssup 1}_r)\,\d r\Big)\,\xi_2(\zeta^{\ssup 1}_{t_1})\,m_l(t-t_1,\zeta^{\ssup 1}_{t_1})\,m_{n-l}(t-t_1,\zeta^{\ssup 1}_{t_1})\,\d t_1.
\end{aligned}
\end{equation}
The induction hypothesis for $m_l$ and $m_{n-l}$ says that 
\begin{equation}\label{proofeq5}
\begin{aligned}
m_l(t-t_1,&\zeta^{\ssup 1}_{t_1})\,m_{n-l}(t-t_1,\zeta^{\ssup 1}_{t_1})\\
&=\sum_{k_1=0}^{l-1}\sum_{k_2=0}^{n-l}\sum_{T_{1}\in\mathcal{T}_{k_1}}\sum_{T_{2}\in\mathcal{T}_{k_2}}\sum_{I_1\in\mathcal{N}(T_{1})}\sum_{I_2\in\mathcal{N}(T_{2})} c_{k_1,l}\,c_{k_2,n-l}\,\Phi_{\zeta^{\ssup 1}_{t_1}}(T_{1},I_1,t-t_1)\Phi_{\zeta^{\ssup 1}_{t_1}}(T_{2},I_2,t-t_1).
\end{aligned}
\end{equation}
Let us denote by $T^{\ssup {1,2}}$ the tree in $\mathcal{T}_{k_1+k_2+1}$ formed by attaching the tree $T_{1}$ to the left of the unique child of the root and the tree $T_{2}$ to the right of the unique child of the root. Then the Markov property at time $t_1$ gives the following concatenation property of $\Phi$:
$$
\begin{aligned}
\sum_{I_1\in\mathcal{N}(T_{1})}&\sum_{I_2\in\mathcal{N}(T_{2})}\int_0^t \bq\Big[\exp\Big(\int_0^{t_1}\xi(\zeta^{\ssup 1}_r)\,\d r\Big)\,\Phi_{\zeta^{\ssup 1}_{t_1}}(T_{1},I_1,t-t_1)\Phi_{\zeta^{\ssup 1}_{t_1}}(T_{2},I_2,t-t_1)\Big]\,\d t_1\\
&=\sum_{I\in \mathcal{N}(T^{\ssup {1,2}})}\Phi_x(T^{\ssup {1,2}},I,t).
\end{aligned}
$$
Then, integrating both sides of (\ref{proofeq4}) with respect to $\bq$, we get
\begin{equation}\label{proofeq6}
\begin{aligned}
\bq\big[A(t)\1_{\{T\in[0,t]\}}\big]&=
\bq\big[\mbox{l.h.s.~of }(\ref{proofeq4})\big]\\
&=\sum_{l=1}^{n-1}\binom{n}{l}\sum_{k_1=0}^{l-1}\sum_{k_2=0}^{n-l}\sum_{T_{1}\in\mathcal{T}_{k_1}}\sum_{T_{2}\in\mathcal{T}_{k_2}} \,c_{k_1,l}\,c_{k_2,n-l}\,\sum_{I\in \mathcal{N}(T^{\ssup {1,2}})}\Phi_x(T^{\ssup {1,2}},I,t).
\end{aligned}
\end{equation}
Let $\mathcal{T}_k^{k_1,k-k_1-1}$ denote the set of trees in $\Tcal_k$ such that the two subtrees of the child of $\emptyset$ lie in $\Tcal_{k_1}$ and $\Tcal_{k-k_1-1}$, respectively. By changing the order of the sum we get that the right-hand side of (\ref{proofeq6}) is equal to
\begin{equation}
\sum_{k=1}^{n-1}\sum_{k_1=0}^{k-1}\sum_{T\in\mathcal{T}_k^{k_1,k-k_1-1}}\sum_{l=k_1+1}^{n-(k-k_1)}\binom{n}{l}\,c_{k_1,l}\,c_{k-k_1-1,n-l}\,\sum_{I\in\mathcal{N}(T)}\Phi_x(T,I,t).
\end{equation}
By (\ref{combid}) we have
\begin{equation}
\sum_{l=k_1+1}^{n-(k-k_1)}\binom{n}{l}\,c_{k_1,l}\,c_{k-k_1-1,n-l}=c_{k,n}.
\end{equation}
This, together with (\ref{inter2}) finishes the proof of (\ref{MomentFormula}).
\end{proof}

\begin{remark}\label{MomFormGeneral} There are also versions of Theorem~\ref{thrmmoment} for more general branching mechanisms as proposed in Section~\ref{sec-moregeneral} above. Under the additional assumption that $\sum_{k\in\N}k^n \xi_k$ converges almost surely, one can extend Theorem~\ref{thrmmoment} to this setting. The main change in (\ref{MomentFormula}) is that the terms involving the $c_{n,k}$ and $\xi_2$ must be replaced by a term of the form
$$
\sum_{{\rm mark}} \prod_v \sum_{k=2}^\infty \big(k^{{\rm mark}(v)}-k\big)\xi_k(X^{\ssup v}_{t_{I(v)}}),
$$
where the marks are now taken from a more complex set than $\{1,\dots,n\}$. Since the formulas arising are much more cumbersome, we abstained from writing them down carefully and proving them. However, it is easily seen from the above proof that they have a form which also admits an analysis of the large-$t$ limit of the moments of $m_n$ in the same way as we do in Section~\ref{sec-proofThm}.
\end{remark}

\section{Proof of the main result}\label{sec-proofThm}

\noindent In this section, we prove the main result of our paper, the moment asymptotics formulated in Theorem~\ref{thm-Main}. The proof will be crucially based on the Feynman-Kac-type formula for $m_n$ given in Theorem~\ref{thrmmoment} above. Another important ingredient is a large-deviations principle for the local times of the branching random walk, which we will provide in Section~\ref{sec-LDPBRW}. The proof of the lower and upper bound of the moment asymptotics are in Sections~\ref{sec-Prooflowbound} and ~\ref{sec-Proofuppbound}, respectively.

\subsection{LDP for the local times of the BRW}\label{sec-LDPBRW}

\noindent In this section we formulate and prove a large-deviations principle (LDP) for the normalised occupation time measures (the local times) of the BRW introduced in Section~\ref{sec-momentsBRWRE}, for a fixed tree $T=(\emptyset,S,L,E)\in\mathcal{T}_k$  and a fixed monotonous numbering $I \in \mathcal{N}(T)$, as the time parameter tends to infinity. 

We define the local times of the BRW as the sum of the times that its random walk segments $X^{\ssup{u,v}}$ with $(u,v) \in E$ spend in in a given site $z \in \Z^d$. More precisely, assume that $T\in\Tcal_k$ and let a time vector $\widehat t=(t_0,\dots,t_{k+1}) \in Z_k(t)$ be given and define the local time of the BRW in $z\in\Z^d$ as
    \begin{equation}  \label{BRW_local_times}
     \ell_{\widehat t}(z)=\sum_{(u,v) \in E}\int^{t_{I(v)}}_{t_{I(u)}} \delta_{X^{\ssup{u,v}}_r}(z) \,\d r. 
    \end{equation}
Then its total mass of is equal to 
\begin{equation}\label{totalmass}
m(\widehat{t}):=\sum_{z\in \Z^d} \ell_{\widehat{t}}(z)= \sum_{(u,v) \in E} (t_{I(v)}-t_{I(u)}).
\end{equation}
Hence, we normalise the local times and obtain
    \begin{equation}\label{BRW_occu_time_mes}
    L_{\widehat{t}}(z) =\frac{\ell_{\widehat{t}}(z)}{m(\widehat{t})}, \;\;\;z\in\Z^d;
    \end{equation}
a random element of the set $\Pcal(\Z^d)$ of all probability measures on $\Z^d$. Fix the starting site $x$ of the BRW. Let $\mathbb{T}_R^d$ be the lattice cube of length $2R+1$ centred at $x$. We consider the periodised local times
\begin{equation}
L_{\widehat{t}}^{\ssup R}(z):=\sum_{y\in (2R+1)\Z^d+x}L_{\widehat{t}}(z+y),\qquad z\in\mathbb{T}_R^d;
\end{equation}
a random element of the set $\Pcal(\mathbb{T}_R^d)$.
Our LDP reads as follows.

\begin{lemma}\label{thm-LDPBRW} Fix $k\in\N_0$, $T\in\Tcal_k$, $I\in\Ncal(T)$ and $R\in\N$. Furthermore, fix the starting site $x$ of the BRW and a vector $\widehat s=(s_0,\dots,s_{k+1})\in Z_k(1)$ and a sequence $\widehat s_t\to\widehat s$ as $t\to\infty$. Then the normalised local times $L_{t\widehat s_t}^{\ssup R}$ satisfy, as $t\to\infty$, the (full) large deviation principle with scale $tm(\widehat s)$ and rate function $\kappa S_R^{\ssup {\rm per}}$, where
\begin{equation}
S_R^{\ssup {\rm per}}(\mu)= \sum_{y_1,y_2 \in \mathbb{T}_R^d\colon y_1 \sim y_2} \Big(\sqrt{\mu(y_1)}-\sqrt{\mu(y_2)}\Big)^2. \label{ratefctper}
\end{equation} 
\end{lemma}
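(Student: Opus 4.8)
The plan is to reduce the LDP for the branching random walk local times $L_{t\widehat s_t}^{\ssup R}$ to the well-known LDP for the local times of a single continuous-time simple random walk, and then to glue the pieces together along the tree. The starting point is the observation that, for a fixed tree $T\in\Tcal_k$, a fixed numbering $I$ and a fixed time vector $\widehat t = t\widehat s_t$, the BRW is nothing but a finite, deterministic concatenation of $\#E = 2k+1$ independent simple random walk bridges $X^{\ssup{u,v}}$, run over time intervals of lengths $t(s_{I(v)}^{(t)}-s_{I(u)}^{(t)})\to t(s_{I(v)}-s_{I(u)})$. The total local time $\ell_{t\widehat s_t}$ is the sum of the $\#E$ individual local time profiles, and after periodisation onto $\mathbb{T}_R^d$ each summand is the (periodised, unnormalised) occupation measure of one random walk piece on a bounded set. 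So the lemma is, at its core, an additivity statement for LDPs plus the classical Donsker--Varadhan / Gärtner estimates on $\mathbb{T}_R^d$.

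The key steps, in order, are as follows. First I would recall the single-walk input: for a continuous-time simple random walk on $\Z^d$ with generator $\kappa\Delta$, run for time $\tau_t$ with $\tau_t/t\to\tau>0$, the periodised normalised occupation measures on $\mathbb{T}_R^d$ satisfy the LDP with speed $t\tau$ (equivalently $\tau_t$) and rate function $\kappa S_R^{\ssup{\rm per}}$; this is standard (it is the finite-state-space version of the Donsker--Varadhan LDP, the rate function being the Dirichlet form of $\kappa\Delta$ on the torus $\mathbb{T}_R^d$, whose square-root form is exactly \eqref{ratefctper}), and it is insensitive to the starting point and to whether one conditions on the endpoint, since on a finite state space these only change the rate function by a bounded additive correction that disappears on the exponential scale. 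Second, I would upgrade this to the joint LDP for the vector of the $\#E$ occupation measures of the independent pieces: by independence, the joint law satisfies the LDP on $\Pcal(\mathbb{T}_R^d)^{\#E}$ with speed $tm(\widehat s)$ and rate function $\sum_{(u,v)\in E}\frac{m(\widehat s_{(u,v)})}{m(\widehat s)}\,\kappa S_R^{\ssup{\rm per}}(\mu_{(u,v)})$, where $m(\widehat s_{(u,v)}) = s_{I(v)}-s_{I(u)}$ is the asymptotic relative length of the bond $(u,v)$. (Here the slightly varying lengths $\widehat s_t\to\widehat s$ cause no trouble: a speed that is $t\cdot(m(\widehat s)+o(1))$ gives the same LDP with speed $tm(\widehat s)$.) Third, the map from this vector of measures to $L_{t\widehat s_t}^{\ssup R}$ is the continuous (affine) map $(\mu_{(u,v)})_{(u,v)\in E}\mapsto \sum_{(u,v)}\frac{m(\widehat s_{(u,v)})}{m(\widehat s)}\mu_{(u,v)}$, so by the contraction principle $L_{t\widehat s_t}^{\ssup R}$ satisfies the LDP with speed $tm(\widehat s)$ and rate function
\[
\mu\mapsto \inf\Big\{\sum_{(u,v)\in E}\tfrac{m(\widehat s_{(u,v)})}{m(\widehat s)}\,\kappa S_R^{\ssup{\rm per}}(\mu_{(u,v)})\;:\;\sum_{(u,v)\in E}\tfrac{m(\widehat s_{(u,v)})}{m(\widehat s)}\,\mu_{(u,v)}=\mu\Big\}.
\]
Finally I would identify this variational expression with $\kappa S_R^{\ssup{\rm per}}(\mu)$ itself. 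Convexity of $S_R^{\ssup{\rm per}}$ (the Dirichlet-form functional $\mu\mapsto\sum(\sqrt{\mu(y_1)}-\sqrt{\mu(y_2)})^2$ is convex, since it equals $\sup$ over test functions of an affine functional of $\mu$) gives, by Jensen, that the infimum is $\ge \kappa S_R^{\ssup{\rm per}}(\mu)$; taking $\mu_{(u,v)}\equiv\mu$ for every bond shows it is $\le\kappa S_R^{\ssup{\rm per}}(\mu)$. Hence equality, and the lemma follows.

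The main obstacle I anticipate is not any single step but making the first step clean: one needs the LDP for the periodised occupation measure of a continuous-time random walk on $\Z^d$ run over a finite but $t$-dependent horizon, uniformly in the (convergent) choice of horizon and uniformly in the starting point, and with the endpoint either free or pinned. On the finite state space $\mathbb{T}_R^d$ this is classical spectral/Donsker--Varadhan theory — the upper bound via an exponential tilting (Gärtner's lemma) and the lower bound via a change of measure to the tilted walk — but some care is required because the occupation measure on $\mathbb{T}_R^d$ only ``sees'' the walk modulo $(2R+1)\Z^d$, so one should work directly with the projected walk on the torus, whose generator is $\kappa\Delta$ with periodic boundary conditions, and whose Dirichlet form has exactly the square-root representation in \eqref{ratefctper}. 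Identifying the Legendre transform of the top eigenvalue of the tilted generator with $\kappa S_R^{\ssup{\rm per}}$ is the one computation that has to be done honestly; after that, the tree concatenation and the contraction/convexity arguments are routine and short.
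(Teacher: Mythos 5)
Your proposal is correct and follows essentially the same route as the paper: reduce to the classical single-walk Donsker--Varadhan/G\"artner LDP on the torus (uniform in the starting point and under pinning of the endpoint), use the conditional independence of the tree segments given the splitting positions, and note that $L_{t\widehat s_t}^{\ssup R}$ is a convex combination of the segments' occupation measures, with the rate function identified via contraction and convexity of $S_R^{\ssup{\rm per}}$. The paper's proof is just a terser version of this, leaving the contraction/convexity identification implicit as an ``elementary finite convex combination'' argument.
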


\begin{proof} The special case $k=0$ is classic and well-known, see \cite{DV75-83, G77, GM98}. Here $\widehat s=(0,1)$ and $m(\widehat s)=1$, and the BRW consists of just one random walk with start in $x$ and time interval $[0,t]$. This LDP holds even locally uniformly in $\widehat s\in Z_k(1)$, as is seen from the proof, which uses an eigenvalue expansion and the G\"artner-Ellis theorem, to turn it into modern notation. This also shows that the LDP is the same under the sub-probability measure that conditions on a fixed starting site and restricts to a fixed terminal site.

The general case is an easy consequence of that classical result, as the random walk segments $X^{\ssup{u,v}}$ with $(u,v)\in E$ are conditionally independent, after conditioning all the starting site and restricting to all the terminating sites, and these are only finitely many. Under this sub-probability measure, the normalised local times of each segment $X^{\ssup{u,v}}$ satisfy the LDP with scale $t (s_{I(v)}-s_{I(u)})$ and the rate function in \eqref{ratefctper}, and $L_{t\widehat s_t}^{\ssup R}$ is just an elementary finite convex combination of these independent objects. The claimed LDP follows by summing over all the starting and terminating sites, as these are only finite sums.
\end{proof}

\subsection{Proof of the lower bound}\label{sec-Prooflowbound}

\noindent Jensen's inequality gives, for any $n,p\in\N$ and any $x,y\in\Z^d$,
\begin{equation}\label{Jensen}
\langle m_n^p(t,x) \rangle \geq \langle m_n^p(t,x,y) \rangle = \langle E_x(\eta(t,y)^n)^p \rangle \ge \langle E_x(\eta(t,y))^{np} \rangle =  \langle m_1^{np}(t,x,y)\rangle,\qquad t>0.
\end{equation}
Now we can apply the result \eqref{MomentGM98} from \cite{GM98} for $np$ instead of $p$ and obtain the lower bound in \eqref{Moment_n_p}; see also \eqref{Moment_n_p_heur}. Here we recall that the logarithmic asymptotics of  $\langle m_1^{np}(t,x,y)\rangle$ are the same as for $\langle m_1^{np}(t,x)\rangle$, as mentioned in subsection \ref{sec-PAMconn}.

\subsection{Proof of the upper bound}\label{sec-Proofuppbound}

\noindent Now we give the corresponding upper estimate for the moments $ \langle m^p_n(t,x) \rangle $ (which applies then certainly also to $ \langle m^p_n(t,x,y) \rangle $). Recall from \eqref{MomentFormula} that $ \langle m^p_n(t,x) \rangle $ is the expectation of the $p$-th power of the sum of  $c_{k,n}\Phi_x(T,I,t)$ over $k\in\{0,\dots,n-1\}$,  $T\in\Tcal_k$ and $I\in\Ncal(T)$, where $\Phi_x(T,I,t)$ is given in \eqref{PhixTIdef}. Rewriting $\Phi_x(T,I,t)$ using the local times of the BRW,
\begin{equation}
\Phi_x(T,I,t)=\int_{Z_k(t)} \d\widehat t\, \E^{\ssup{T,I,\widehat t}}_{x} \Big[\exp\Big(\sum_{z\in \Z^d}\;\; \xi(z) \ell_{\widehat{t}}^{\ssup{T,I}}(z) \Big) \Big(\prod_{v\in S}\xi_2(X^{\ssup {u,v}}_{t_{I(v)}})\Big)\Big],
\end{equation}
it is clear (in the case $\rho=\infty$, from Assumption~\ref{momentassump}) that it is only the exponential term involving the local times that will turn out to be responsible for the claimed asymptotics, and only the summand for $k=n-1$ will turn out to give the leading asymptotics. We will dig this term out with repeated applications of Jensen's and H\"older's inequality.

We use the inequality
    \begin{equation}\label{ineq}
    \Big\langle \Big(\sum_{i\in\Xcal}X_i\Big)^p \Big\rangle \le |\Xcal|^{p-1}\sum_{i\in\Xcal}\big\langle X_i^p\big\rangle,
    \end{equation}
derived from Jensen's inequality, three times to get
\begin{equation}
\begin{aligned}
\langle m^p_n(t,x) \rangle& \le n^{p-1} \sum_{k=0}^{n-1} |\mathcal{T}_k|^{p-1} \sum_{T \in \mathcal{T}_k} |\mathcal{N}(T)|^{p-1} \sum_{I \in \mathcal{N}(T)}c^p_{k,n}\big\langle \Phi^p_x(T,I,t) \big\rangle\\
&=\e^{o(t)}\sum_{k=0}^{n-1}\sum_{T \in \mathcal{T}_k}\sum_{I \in \mathcal{N}(T)} \big\langle\Phi^p_x(T,I,t)\big\rangle. \label{MomentIneq1}
\end{aligned}
\end{equation}
Hence, the only dependence on $t$ now sits in the last expectation, $\langle \Phi^p_x(T,I,t) \rangle$, and it is enough to show that this term satisfies the claimed asymptotics and that it is maximal for $k=n-1$. Using  \eqref{ineq} in integral form for the integral over $\widehat t$ and Jensen's inequality for the expectation, we see that
    \begin{equation*}
    \Phi^p_x(T,I,t) \le  |Z_k(t)|^{p-1} \int_{Z_k(t)} \d\widehat t\, \Big(\E^{\ssup{T,I,\widehat t}}_{x} \Big[\exp\Big(\sum_{z\in \Z^d}\;\; \xi(z) \ell_{\widehat{t}}(z) \Big) \Big(\prod_{v\in S}\xi_2(X^{\ssup {u,v}}_{t_{I(v)}})\Big)\Big]\Big)^p, \label{ExpectIneq1}
    \end{equation*}
where $|Z_k(t)|=t^k/k!$ denotes volume of  $Z_k(t)$. Since this volume term is negligible for the logarithmic asymptotics, we only have to concentrate on the integral. Making the change of variables $\widehat t=t \widehat s$, we see that 
    \begin{equation}
    \Phi^p_x(T,I,t) \le  \e^{o(t)}\int_{Z_k(1)} \d\widehat s\, \Big(\E^{\ssup{T,I,t\widehat s}}_{x} \Big[\exp\Big(\sum_{z\in \Z^d}\;\; \xi(z) \ell_{t\widehat{s}}(z) \Big) \Big(\prod_{v\in S}\xi_2(X^{\ssup {u,v}}_{t s_{I(v)}})\Big)\Big]\Big)^p. \label{ExpectIneq1a}
    \end{equation}

At this stage we separate the cases $0\leq \rho <\infty$ and $\rho=\infty$.

\underline{ $0\leq\rho<\infty$:}
Our next step is to take expectation with respect to the branching/killing environment and to separate the exponential term from the powers of the $\xi_2$ terms by means of H\"older's inequality. Fix $q,q'>1$ (later chosen in dependence on $t$) satisfying $\frac 1q+\frac 1{q'}=1$, then we have
\begin{equation}
\begin{aligned}
\Big\langle\Big(&\E^{\ssup{T,I,t\widehat s}}_{x} \Big[\exp\Big(\sum_{z\in \Z^d}\;\; \xi(z) \ell_{t\widehat{s}}(z) \Big) \prod_{v\in S}\xi_2(X^{\ssup {u,v}}_{ts_{I(v)}}) \Big]\Big)^p\Big\rangle \\
&\le \Big\langle\E^{\ssup{T,I,t\widehat s}}_{x} \Big[\exp\Big(q\sum_{z\in \Z^d}\;\; \xi(z) \ell_{t\widehat{s}}(z) \Big)\Big]^{p/q} \E^{\ssup{T,I,t\widehat s}}_{x} \Big[\prod_{v\in S}\xi^{q'}_2(X^{\ssup {u,v}}_{ts_{I(v)}})\Big]^{p/q'}\Big\rangle\\
&\leq \Big\langle\E^{\ssup{T,I,t\widehat s}}_{x} \Big[\exp\Big(q\sum_{z\in \Z^d}\;\; \xi(z) \ell_{t\widehat{s}}(z) \Big)\Big]^{p}\Big\rangle^{1/q}
\Big\langle\E^{\ssup{T,I,t\widehat s}}_{x} \Big[\prod_{v\in S}\xi^{q'}_2(X^{\ssup {u,v}}_{ts_{I(v)}})\Big]^{p}\Big\rangle^{1/q'},  \label{ExpectIneq2}
    \end{aligned} 
\end{equation}
where we used H\"{o}lder's inequality twice. Now we show that the second term in the above display is negligible. Recall that $|S|=k-1$ and that $\xi_2(x)$ is i.i.d.~in $x\in\mathbb{Z}^d$. Then using Jensen's inequality and Fubini's theorem we get
\begin{equation}\label{prodtermesti}
\begin{aligned}
\Big\langle\E^{\ssup{T,I,t\widehat s}}_{x} \Big[\prod_{v\in S}\xi^{q'}_2(X^{\ssup {u,v}}_{ts_{I(v)}})\Big]^{p}\Big\rangle^{1/q'}&\leq \Big[\E^{\ssup{T,I,t\widehat s}}_{x} \Big\langle\prod_{v\in S}\xi^{pq'}_2(X^{\ssup {u,v}}_{ts_{I(v)}})\Big\rangle\Big]^{1/q'}
\leq\Big\langle \xi_2^{p(k-1)q'}(0) \Big\rangle^{1/q'},
\end{aligned}
\end{equation}
where for the last inequality we used the fact that $\xi_2\geq 0$. Now using the inequality $x\leq \e^x$ for $x>0$ we get
\begin{equation}
\begin{aligned}\label{boundsecterm}
&\Big\langle \xi_2^{p(k-1)q'}(0) \Big\rangle^{1/q'}\leq \Big\langle \e^{p(k-1)q'\xi_2(0)} \Big\rangle^{1/q'}=\exp\Big\{\frac {1}{ q'}H_2\big((k-1)pq'\big)\Big\},
\end{aligned}
\end{equation}
where we recall that $H_2$ denotes the logarithmic moment generating function of $\xi_2(0)$. Now we pick $q=q_t=1+\eps_t$ and $q_t'=1+1/\eps_t$ depending on $t$ such that $\eps_t\searrow0$ as $t\to\infty$. In our case where $\rho<\infty$, we have $H_2(t)\leq \rho t\log t+O(t)$ as $t\to\infty$, so it is clear that we can choose $\eps_t$ converging to 0 slowly enough so that as $t\to\infty$
\begin{equation}\label{qchoice}
\frac{\eps_t}tH_2(1/\eps_t)\to 0.
\end{equation}
Hence, by \eqref{qchoice} and \eqref{boundsecterm} we can conclude that the right-hand side of \eqref{prodtermesti} is $\e^{o(t)}$, i.e., the second term on the right-hand side of \eqref{ExpectIneq2} is negligible.

Proceeding as in the proof of \eqref{MomentGM98} in  \cite{GM98} and using the LDP of Lemma~\ref{thm-LDPBRW}, we get that
\begin{equation}
\Big\langle\E^{\ssup{T,I,t\widehat s}}_{x} \Big[\exp\Big(q_t\sum_{z\in \Z^d}\;\; \xi(z) \ell_{t\widehat{s}}(z) \Big)\Big]^{p}\Big\rangle^{1/q_t}\leq \exp\Big(\frac{1}{q_t}H\big(q_t pt m(\widehat{s})\big)-\frac{1}{q_t}2d\kappa pt m(\widehat{s}) \chi(\rho/\kappa)+o(t)\Big).
\end{equation}
Recall that the LDP in Lemma \ref{thm-LDPBRW} holds even uniformly in $\widehat{s}\in Z_k(1)$ away from zero. Hence, we can easily conclude that
\begin{equation}
\langle\Phi^p_x(T,I,t)\rangle^p \leq \e^{o(t)}\int_{Z_k(1)}\d\widehat{s}\,\exp\Big(\frac{1}{q_t}H\big(q_t pt m(\widehat{s})\big)-\frac{1}{q_t}2d\kappa pt m(\widehat{s}) \chi(\rho/\kappa)\Big).
\end{equation}
By \eqref{AssumptionHl}, for $\rho>0$ we have $H(t)\gg t$ as $t\to\infty$ and for $\rho=0$, $\chi(0)=0$. Finally, since $q_t\to 1$ as $t\to\infty$, by Laplace's method we get that, for any $T,I$ and $k\in\{0,\dots,n-1\}$,
\begin{equation}
\langle\Phi^p_x(T,I,t)\rangle^p \leq \e^{o(t)} \exp\Big(\frac{1}{q_t}H(q_t pt(k+1))-\frac{1}{q_t}2d\kappa p t (k+1) \chi(\rho/\kappa)\Big),
\end{equation}
as the main contribution comes from $\widehat s=(0,\dots,0,1)$, having total live time $m(\widehat s)=k+1$. The interpretation is that the main contribution to the moments comes from the BRW splitting into $k$ particles practically immediately after the beginning.
 
Hence, using \eqref{MomentIneq1} and the fact that $H(t)\gg t$ for $\rho>0$ and $\chi(\rho)=0$ for $\rho=0$ once again, by Laplace's method (\lq the largest rate wins\rq) we get 
\begin{equation}
\langle m^p_n(t,x) \rangle \leq \e^{o(t)}\exp\Big(\frac{1}{q_t}H(q_t pt n)-\frac{1}{q_t}2d\kappa p t n \chi(\rho/\kappa)\Big).
\end{equation}
The proof of the upper bound for $0\leq \rho <\infty$ is therefore finished by noting that
\begin{equation}\label{prooffinish}
\exp\Big(\frac{1}{q_t}H(q_t pt n)\Big)=\e^{H(ptn)} \e^{o(t)},\qquad t\to\infty.
\end{equation}
This is seen as follows. Recall that $q_t\searrow 1$, and note from \cite[Remark 1.1(b)]{GM98} that the convergence in \eqref{AssumptionHl} is uniform on $[0,1]$ and hence also locally uniform on $[0,\infty)$. Hence, writing
\begin{equation}
\frac 1t\Big(\frac1{q_t}H\big(q_t tpn\big)-H(tpn)\Big)=\frac1{q_t} \Big(\frac{H\big(q_t tpn\big)-pnq_tH(t)}{t}-\frac{H(pnt)-pnH(t)}{t}\Big)
+\frac{q_t-1}{q_t}\frac{pnH(t)-H(tpn)}t,
\end{equation}
shows that \eqref{prooffinish} holds and finishes the proof.

\medskip

\underline{$\rho=\infty$:} We start from \eqref{ExpectIneq1a}. In order to express the $p$-th power of the expectation, we introduce $p$ independent copies $X_{t_{I(u)}}^{\ssup{i,u,v}}$, $i=1,\dots,p$, of $X_{t_{I(u)}}^{\ssup{u,v}}$ and denote by $\ell^{\ssup p}_{t\widehat s}$ the sum over $i\in\{1,\dots,p\}$ of the local times of these random walks. For $z\in\Z^d$ define
\begin{equation}
r(z)=\sum_{i=1}^p\sum_{(u,v)\in S}\delta_z\big(X_{t_{I(u)}}^{\ssup {i,u,v}}\big);
\end{equation}
and introduce the notation
\begin{equation}\label{h2e}
G_2(l,k)=\frac{\big\langle \e^{l\xi_2(0)}\xi_2(0)^k\big\rangle}{\big\langle \e^{l\xi_2(0)}\big\rangle}=\frac{\big\langle \e^{l\xi(0)}\xi_2(0)^k\big\rangle}{\big\langle \e^{l\xi(0)}\big\rangle},\qquad l,k\in[0,\infty),
\end{equation}
where the last step used that $\xi_2(0)$ and $\xi_0(0)$ are independent; recall that $\xi=\xi_2-\xi_0$. From (\ref{ExpectIneq1a}) we have
\begin{equation}\label{rhoinfinity1}
 \begin{aligned}
\Big\langle\Phi^p_x(T,I,t)\Big\rangle&\leq \e^{o(t)}\int_{Z_k(1)} \d\widehat s\,\Big\langle\mathbb{E}\Big[\exp\Big(\sum_{z\in\Z^d}\ell_{t\widehat{s}}^{\ssup p}(z)\xi(z)\Big)\prod_{z\in\Z^d}\xi_2(z)^{r(z))}\Big]\Big\rangle\\
&=\e^{o(t)}\int_{Z_k(1)} \d\widehat s\, \mathbb{E}\Big[\Big\langle\exp\Big(\sum_{z\in\Z^d}\ell_{t\widehat{s}}^{\ssup p}(z)\xi(z)\Big)\prod_{z\in\Z^d}\xi_2(z)^{r(z))}\Big\rangle\Big]\\
&= \e^{o(t)}\int_{Z_k(1)} \d\widehat s\, \mathbb{E}\Big[\prod_{z\in\Z^d}\Big\langle \e^{\xi(0) \ell_{t\widehat s}^{\ssup  p}(z)}\xi_2(0)^{r(z)}\Big\rangle\Big]\\
&=\e^{o(t)}\int_{Z_k(1)} \d\widehat s\, \mathbb{E}\Big[\Big(\prod_{z\in\Z^d} \e^{H( \ell_{t\widehat s}^{\ssup  p}(z))}\Big)\prod_{z\in\Z^d}G_2\big(\ell_{t\widehat s}^{\ssup  p}(z), r(z)\big)\Big],
\end{aligned}
\end{equation}
where we used Fubini's theorem and (\ref{h2e}).

Note that $k\mapsto G_2(l,k)$ is log-convex for any $l$, since it is a moment generating function. Since $G_2(l,0)=0$, it is also easily seen to be log-subadditive. As a consequence, $\partial_l G_2(l,k)=G_2(l,k+1)-G_2(l,1)G(l,k)$ is nonnegative, hence $l\mapsto G_2(l,k)$ is increasing. Hence, since the local times $\ell_{t\widehat s}^{\ssup  p}(z)$ sum up to $tpm(\widehat{s})$, we have
$$
\prod_{z\in\Z^d}G_2\big(\ell_{t\widehat s}^{\ssup  p}(z), r(z)\big)
\leq \prod_{z\in\Z^d}G_2\big(tpm(\widehat{s}), r(z)\big)
\leq G_2\Big(tpm(\widehat{s}),\sum_{z\in\Z^d} r(z)\Big)=G_2\big(tpm(\widehat{s}),(n-1)p\big),
$$
since the $r(z)$ sum up to $p|S|=p(n-1)$. Note that the right-hand side is $\leq \e^{o(t)}$ by Assumption \ref{momentassump}. Using this fact in \eqref{rhoinfinity1}, we see that
$$
\Big\langle\Phi^p_x(T,I,t)\Big\rangle\leq\e^{o(t)}\int_{Z_k(1)} \d\widehat s\, \mathbb{E}\Big[\prod_{z\in\Z^d} \e^{H( \ell_{t\widehat s}^{\ssup  p}(z))}\Big].
$$
Now, precisely as in the proof of \eqref{MomentGM98} in \cite{GM98}, one proves that
$$
\mathbb{E}\Big[\prod_{z\in\Z^d} \e^{H( \ell_{t\widehat s}^{\ssup  p}(z))}\Big]\leq \e^{H(tpm(\widehat{s}))-2d\kappa tpm(\widehat{s})+o(t)}.
$$
An inspection of the proof, using the uniformity in the LDP in Lemma~\ref{thm-LDPBRW}, shows that this convergence is locally uniform in $\widehat s$. Hence, like in the above proof in the case $\rho<\infty$, we see that Laplace's method yields the result, after optimising over $\widehat s\in Z_k(1)$ and $k\in\{0,\dots,n-1\}$ (recall that $\chi(\infty)=1$).

\end{document}